\newtheorem{Remark}{Remark}[section]
\newtheorem{Corollary}[Remark]{Corollary}
\newtheorem{Definition}[Remark]{Definition}
\newtheorem{Example}[Remark]{Example}
\newtheorem{Fact}[Remark]{Fact}
\newtheorem{Lemma}[Remark]{Lemma}
\newtheorem{Proposition}[Remark]{Proposition}
\newtheorem{Theorem}[Remark]{Theorem}
\newcommand{\ba}{\begin{array}}
\newcommand{\bc}{\begin{center}}
\newcommand{\bd}{\begin{description}}
\newcommand{\bdm}{\begin{displaymath}}
\newcommand{\be}{\begin{enumerate}}
\newcommand{\beq}{\begin{equation}}
\newcommand{\bdf}{\begin{Definition}}
\newcommand{\bex}{\begin{Example}}
\newcommand{\bft}{\begin{Fact}}
\newcommand{\bl}{\begin{Lemma}}
\newcommand{\bp}{\begin{Proposition}}
\newcommand{\br}{\begin{Remark}}
\newcommand{\bt}{\begin{Theorem}}
\newcommand{\bco}{\begin{Corollary}}
\newcommand{\bhy}{\begin{Hypothesis}}
\newcommand{\ea}{\end{array}}
\newcommand{\ec}{\end{center}}
\newcommand{\ed}{\end{description}}
\newcommand{\edm}{\end{displaymath}}
\newcommand{\ee}{\end{enumerate}}
\newcommand{\eeq}{\end{equation}}
\newcommand{\edf}{\end{Definition}}
\newcommand{\eex}{\end{Example}}
\newcommand{\eft}{\end{Fact}}
\newcommand{\el}{\end{Lemma}}
\newcommand{\ep}{\end{Proposition}}
\newcommand{\er}{\end{Remark}}
\newcommand{\et}{\end{Theorem}}
\newcommand{\eco}{\end{Corollary}}
\newcommand{\ehy}{\end{Hypothesis}}
\newcommand{\bC}{\mathbb{C}}
\newcommand{\bH}{\mathbb{H}}
\newcommand{\bI}{\mathbb{I}}
\newcommand{\bN}{\mathbb{N}}
\newcommand{\bR}{\mathbb{R}}
\newcommand{\bT}{\mathbb{T}}
\newcommand{\bV}{\mathbb{V}}
\newcommand{\bW}{\mathbb{W}}
\newcommand{\bZ}{\mathbb{Z}}
\newcommand{\cC}{\mathcal{C}}
\newcommand{\cH}{\mathcal{H}}
\newcommand{\cM}{\mathcal{M}}
\newcommand{\cO}{\mathcal{O}}
\newcommand{\cW}{\mathcal{W}}
\numberwithin{equation}{section} \errorcontextlines=0
\newcommand{\rank}{\mathrm{rank}\:}
\newcommand{\sub}{\overline{\mathrm{sub}}}
\newcommand{\SO}{\operatorname{SO}}
\newcommand{\Id}{\operatorname{Id}}
\newcommand{\BIF}{\mathcal{BIF}}
\begin{document}

\title[Unbounded sets of solutions]{Unbounded sets of solutions of non-cooperative elliptic systems on symmetric spaces}

\author{Piotr Stefaniak}
\address{Faculty of Mathematics and Computer Science \\ Nicolaus Copernicus University in Toru\'n\\
PL-87-100 Toru\'{n} \\ ul. Chopina $12 \slash 18$ \\
Poland}

\email{cstefan@mat.umk.pl}

\date{\today}

\keywords{global bifurcations, symmetric space, non-cooperative elliptic system, equivariant degree}
\subjclass[2020]{35B32,	35J20 }

\begin{abstract}
The aim of this paper is to show that, for a class of non-cooperative elliptic systems on compact symmetric spaces, any continuum of nontrivial solutions bifurcating from the set of trivial solutions is unbounded. The main tool is the degree for invariant strongly indefinite functionals. The analysis relies on the torus-equivariant structure of the Laplace--Beltrami eigenspaces. The result is obtained by ruling out return to the trivial branch in an equivariant version of the Rabinowitz alternative.
\end{abstract}

\maketitle
\section{Introduction}

In this article we study connected sets of weak solutions of nonlinear elliptic systems of the form
\begin{equation}\label{eq:intr-main}
\left\{\begin{array}{lclcl}
a_1\Delta_M u_1(x)&=&\nabla_{u_1}F(u(x),\lambda) \\
a_2\Delta_M u_2(x)&=&\nabla_{u_2}F(u(x),\lambda) \\
&\vdots&&\ \ \ \text{on}& M, \\
a_p\Delta_M u_p(x) &=&\nabla_{u_p}F(u(x),\lambda)
\end{array}
\right.
\end{equation}
where $M=G/H$ is a compact symmetric space. Here $G$ is a compact connected semisimple Lie group, $H\subset G$ is the fixed-point subgroup of an involutive automorphism of $G$ and $\Delta_M$ is the Laplace--Beltrami operator on $M$. The constants $a_i\in\{-1,1\}$ allow for an indefinite sign structure in the system and $F\in C^2(\bR^p\times\bR,\bR)$ is a nonlinear potential depending on a real parameter $\lambda\in\bR$. We assume that $F$ satisfies suitable structural and growth conditions, stated later as assumptions \textup{(A1)}-\textup{(A4)}. In particular, $0\in\bR^p$ is a critical point of $F(\cdot,\lambda)$ for every $\lambda\in\bR$, so that $u\equiv 0$ is a trivial solution of the system.

To study \eqref{eq:intr-main} we use a variational approach. We associate to the system a functional $\Phi\colon \bH\times\bR\to\bR$ whose critical points correspond to weak solutions. The corresponding gradient map $\nabla_u\Phi$ is a completely continuous perturbation of a Fredholm operator on a Hilbert space $\bH$, which carries a natural orthogonal action of the torus $\bT$ of dimension equal to $\rank M$, arising from the geometry of the symmetric space $M$. The functional $\Phi$ is $\bT$-invariant and its gradient $\nabla_u\Phi$ is $\bT$-equivariant.

In the absence of symmetry, continua of critical points of variational problems are governed by the classical Rabinowitz alternative, which, under suitable parity assumptions, ensures that each bifurcating continuum is either unbounded or meets another characteristic value, see \cite{Nirenberg, Rabinowitz1971, Rabinowitz1986}. In the presence of group symmetries, however, the topological methods underlying this alternative may no longer apply. A fundamental result of Dancer shows that the Leray--Schauder degree of a $G$-equivariant completely continuous perturbation of the identity depends only on its restriction to the fixed-point subspace, see \cite{Dancer1982}. This illustrates the general failure of the classical degree to detect bifurcation phenomena in equivariant problems. Broader formulations of this principle have been developed both for the Brouwer and Leray--Schauder degrees, see \cite{KusBal}, for the abelian case in particular see \cite{IzeVig}.

To overcome this difficulty, one is led to seek topological invariants adapted to equivariant problems. Equivariant degree theories for general maps have been developed in various settings, see for example \cite{IzeVig, KusBal}. In the variational framework relevant here one works with gradient maps, but taking the gradient structure alone does not lead to a genuinely stronger topological invariant. Indeed, a result of Parusiński (see~\cite{Parusinski}) shows that two gradient maps are homotopic if and only if they are gradient-homotopic. Thus, without additional structure, a degree theory defined only for gradient maps cannot capture more information than the classical Brouwer or Leray--Schauder degree. This is why one incorporates group symmetries into the framework.

Building on this idea, further developments have focused on constructing an equivariant degree for compact Lie group actions. In the finite-dimensional setting such a degree has been introduced by Gęba in \cite{Geba} and later extended to infinite-dimensional Hilbert spaces by Rybicki in \cite{Ryb2005milano} for equivariant gradient maps that are completely continuous perturbations of the identity. A degree for $G$-invariant strongly indefinite functionals has been developed in~\cite{GolRyb} and a method for computing it in the presence of a critical orbit of possibly non-zero dimension has been given in~\cite{GS2020}. For general treatments of equivariant degree theory we refer the reader to~\cite{BKS}.

An alternative invariant for detecting bifurcation is the (equivariant) Conley index. An equivariant version suitable for strongly indefinite functionals has been developed by Izydorek in~\cite{Izydorek} and applies in particular to the class of systems considered here. While the Conley index is highly effective at confirming the existence of bifurcation points, it remains essentially local and does not provide information about the global geometry of bifurcating sets - for instance, whether continua are unbounded or how distinct branches may be connected. These global properties, which are central to our analysis, require different topological invariants such as the equivariant degree.

Motivated by this, we study global bifurcation phenomena for nonlinear elliptic systems on compact symmetric spaces using the equivariant degree as the primary tool. While bifurcation theory in the Euclidean setting is relatively well developed, corresponding results on Riemannian manifolds, particularly for systems, are less common. In particular, the unboundedness of continua of solutions has been established only in a limited number of cases. As examples we mention several results obtained on the sphere and related domains.
In~\cite{RybickiLB} global bifurcation from the trivial branch $\{0\} \times \bR$ and unbounded continua have been established for a single equation on the sphere and this analysis has been extended to non-cooperative systems in~\cite{RybSte2015}. A related result has been given in~\cite{RybShiSte}, where elliptic systems have been studied on geodesic balls in the sphere and unboundedness has been proved in particular for hemispheres. The work~\cite{GolSte2021} has considered elliptic systems on the sphere with an invariant potential, allowing for bifurcation from an orbit of stationary solutions and has also established unboundedness of the resulting continua.

The present article extends the bifurcation results of~\cite{RybickiLB} and~\cite{RybSte2015} from the standard sphere, which is a compact symmetric space of rank one, to arbitrary compact symmetric spaces. In the rank-one case the argument relies on an $S^1$-action and the corresponding equivariant degree. In the general case we replace this by the action of the torus $\bT$. While an $S^1$-action would be sufficient to detect bifurcation values, the full torus action carries additional information on the representation structure of Laplace--Beltrami eigenspaces and this makes it possible to prove unboundedness of the bifurcating continua. Theorem~\ref{thm:unbound} shows that, with the possible exception of $\lambda=0$, every parameter value at which bifurcation is expected from the linearized problem actually gives rise to bifurcation from the trivial branch and that the corresponding continua of nontrivial solutions are unbounded. Thus the result provides not only existence of global bifurcation, but also qualitative information about the global structure of the solution set.

The proof combines an equivariant Rabinowitz alternative with an explicit computation of the $\bT$-equivariant bifurcation index. The key ingredients are the description of Laplace--Beltrami eigenspaces in terms of irreducible $G$-representations with given restricted highest weights, the analysis of their restricted weight structure and auxiliary calculations in the Euler ring $U(\bT)$. In particular, Proposition~\ref{prop:first-plane} shows that the real two-dimensional torus representation associated with the restricted highest weight $\alpha$ appears for the first time in the eigenspace corresponding to $\lambda_\alpha$ and with multiplicity one, while Lemma~\ref{lem:multinUT} gives the corresponding coefficient of the bifurcation index explicitly.

The article is organized as follows. Section~\ref{sec:prelim} collects the representation-theoretic, spectral and topological tools used later. Subsection~\ref{sub:weights} recalls weights, highest weights and their restricted counterparts. Subsection~\ref{subsec:laplacian} describes the Laplace--Beltrami spectrum on compact symmetric spaces and the induced torus representation structure of its eigenspaces. Subsection~\ref{subsec:degree} reviews the degree for invariant strongly indefinite functionals and records the auxiliary calculations in the Euler ring of $\bT$ needed in the bifurcation analysis.

Section~\ref{sec:main} contains the main results. We restate the elliptic system in the variational framework, identify the possible bifurcation values, define the $\bT$-equivariant bifurcation index and state an equivariant Rabinowitz alternative in Theorem~\ref{thm:AltRab}. We then compute the relevant coefficients of the bifurcation index and use them to prove Theorem~\ref{thm:unbound}. The final remarks discuss symmetry breaking and explain how the previous results for the sphere are recovered as a special case.

\section{Preliminary results}\label{sec:prelim}

In this section we collect the background material and notation used in the proof of the main theorem.

\subsection{Weights and highest weights}\label{sub:weights}
This subsection recalls the notions of weights, highest weights and their restricted counterparts used later. For background and further details, see \cite{BrockerDieck, FultonHarris, Gurarie, KnappLie}.

Throughout the paper we assume that $G$ is a compact connected semisimple Lie group.
By a representation of $G$ we mean a real or complex vector space $\bV$ equipped with a smooth linear action of $G$. We write $g\cdot v$ for the action of $g\in G$ on $v\in\bV$. A real $G$-representation is called orthogonal if $G$ acts by orthogonal linear maps. A representation is called irreducible if it has no proper nontrivial $G$-invariant subspaces.

Let $T$ be a maximal torus of $G$ and let $\frak t$ be its Lie algebra.
Consider the exponential map $\exp\colon\frak t\to T$ and put $\Lambda=\ker(\exp)$.
Then every $t\in T$ is of the form $t=\exp(\phi)$ for some $\phi\in\frak t$.
Let $\bV$ be a finite-dimensional complex representation of $G$. By a weight of $\bV$ we understand $\lambda\in\Lambda^*:=\{\lambda\in\frak t^*:\lambda(\Lambda)\subset2\pi\bZ\}\subset\frak t^*$
such that the corresponding weight space
\[
\bV_\lambda := \{ v \in \bV : \exp(\phi) \cdot v = e^{i \lambda(\phi)} v \text{ for all } \phi\in\frak t\}
\]
is non-zero.
Any finite-dimensional complex representation $\bV$ of $G$ admits a weight decomposition with respect to $T$, see for example \cite{BrockerDieck},
\begin{equation}\label{eq:weight-decomp}
\bV=\bigoplus_{\lambda\in\Lambda^*}\bV_\lambda.
\end{equation}

Now let $R\subset\frak t^*$ be the root system of $G$ with respect to $T$ and fix a positive system $R^+\subset R$. This determines a partial order on $\Lambda^*$ and if $\bV$ is irreducible, then among its weights there exists a unique maximal one. It is called the highest weight of $\bV$.

Following \cite[Subsection~5.7.2]{Gurarie}, we recall the restricted root system and the corresponding notions of restricted weights and restricted highest weights.
From now on $H\subset G$ is the fixed-point subgroup of an involutive automorphism $\sigma\colon G\to G$, that is, $\sigma^2=id$ and $H=\{g\in G:\sigma(g)=g\}$. Let $\frak g$ be the Lie algebra of $G$ and let $\frak g=\frak h\oplus\frak m$ be the Cartan decomposition, where $\frak h$ is the Lie algebra of $H$. Let $\frak a\subset\frak m$ be a maximal abelian subalgebra. Without loss of generality we assume that $\frak a\subset\frak t$. Set $\bT=\exp(\frak a)\subset T$. Then $\bT$ is a torus of dimension $r=\dim\frak a$.
The restricted root system is defined by
\[
\Sigma:=\{\alpha_{|\frak a}:\alpha\in R, \alpha_{|\frak a}\neq0\}\subset\frak a^*.
\]
Fix a decomposition $\Sigma=\Sigma^+\cup\Sigma^-$ into positive and negative restricted roots
and denote by $\{\alpha_1,\dots,\alpha_r\}\subset\Sigma^+$ the corresponding set of simple restricted roots. Then $\{\alpha_1,\dots,\alpha_r\}$ is a basis of $\frak a^*$.

Let $\bV$ be a finite dimensional complex representation of $G$. Put
$\Lambda_{\frak a}^*:=\{\lambda_{|\frak a}:\lambda\in\Lambda^*\}\subset\frak a^*$.
A restricted weight of $\bV$ is an element $\mu=\lambda_{|\frak a}\in\Lambda_{\frak a}^*$, where $\lambda\in\Lambda^*$ is a weight of $\bV$. For such $\mu$ we define the corresponding restricted weight space by
\begin{equation}\label{eq:restricted-weight-space}
\bV_\mu:=\bigoplus_{\lambda_{|\frak a}=\mu}\bV_\lambda.
\end{equation}
Let
$
\Lambda_{\frak a}^+:=\{\sum_{j=1}^r k_j\alpha_j:k_j\in\bN_0\}\subset\frak a^*
$
and define a partial order on restricted weights by putting $\mu\preceq\nu$ whenever $\nu-\mu\in\Lambda_{\frak a}^+$. We write $\mu\prec\nu$ if $\mu\preceq\nu$ and $\mu\neq\nu$.
If $\bV$ is irreducible and $\lambda\in\Lambda^*$ is its highest weight, then the restriction $\lambda_{|\frak a}\in\frak a^*$ is called the restricted highest weight of $\bV$. In particular, it is maximal among the restricted weights of $\bV$ with respect to the above order.

\subsection{Laplace--Beltrami spectrum on symmetric spaces}\label{subsec:laplacian}

Set $M=G/H$ and endow $M$ with the $G$-invariant Riemannian metric induced by the negative Killing form on $\frak g$. Then $M$ is a compact symmetric space of rank $r$.
Our aim in this subsection is to collect spectral properties of the Laplace--Beltrami operator on $M$. In particular, we describe the eigenspaces as $G$-representations and discuss their decompositions with respect to the torus $\bT$.
For background on compact symmetric spaces see \cite{HelgasonDiffGeom} and for the spectral theory of the Laplace--Beltrami operator see \cite[Section~5.7]{Gurarie}.

Let $\bV_{-\Delta_M}(\lambda)\subset L^2(M;\bR)$ denote the eigenspace of $-\Delta_M$ corresponding to the eigenvalue $\lambda$ in the space of real-valued functions. We denote by $\bV_{-\Delta_M}(\lambda)^\bC\subset L^2(M;\bC)$ its complexification, that is,
$
\bV_{-\Delta_M}(\lambda)^\bC=\bV_{-\Delta_M}(\lambda)\otimes_\bR\bC.
$
For a complex-valued function $u$, the equation $-\Delta_Mu=\lambda u$ holds if and only if $\operatorname{Re}u$ and $\operatorname{Im}u$ satisfy the same equation. Therefore $\bV_{-\Delta_M}(\lambda)^\bC$ is precisely the eigenspace of $-\Delta_M$ corresponding to $\lambda$ in the space of complex-valued functions.

Put
$\rho:=\frac12\sum_{\gamma\in\Sigma^+}m_\gamma\gamma,$
where $m_\gamma$ denotes the multiplicity of the restricted root $\gamma\in\Sigma^+$.
For $\alpha\in\Lambda_{\frak a}^+$ define
$\lambda_\alpha:=(\alpha+\rho,\alpha+\rho)-(\rho,\rho),$
where $(\cdot,\cdot)$ is the inner product on $\frak a^*$ induced by the Killing form.
The following result summarizes the spectral description that we will use, see \cite[Thm. 5.7.2]{Gurarie} and the remark following it.

\begin{Theorem}\label{thm:HC-spectrum}
The following hold.
\begin{enumerate}
\item For each $\alpha\in\Lambda_{\frak a}^+$ there exists an irreducible complex $G$-subrepresentation $\cH_\alpha\subset L^2(M;\bC)$ with restricted highest weight $\alpha$ such that $-\Delta_Mu=\lambda_\alpha u$ for every $u\in\cH_\alpha$.
\item Each $\cH_\alpha$ occurs in $L^2(M;\bC)$ with multiplicity $1$.
\item Every eigenvalue of $-\Delta_M$ is of the form $\lambda_\alpha$ for some $\alpha\in\Lambda_{\frak a}^+$.
\item For each eigenvalue $\lambda_\alpha$ one has
\[
\bV_{-\Delta_M}(\lambda_\alpha)^\bC=\bigoplus_{\beta\in\Lambda_{\frak a}^+:\lambda_\beta=\lambda_\alpha}\cH_\beta.
\]
\item If $\alpha\prec\beta$, then $\lambda_\alpha<\lambda_\beta$.
\end{enumerate}
\end{Theorem}

\begin{Example}\label{ex:sphere}
Let $M=S^n$ with the standard metric. Then $M=G/H$ with $G=\SO(n+1)$ and $H=\SO(n)$. In this case $r=\rank M=1$, so if $\alpha$ denotes the unique simple restricted root, then $\Lambda_{\frak a}^+=\{k\alpha:k\in\bN_0\}$. One has $\rho=\frac{n-1}{2}\alpha$ and
$
\lambda_{k\alpha}=k(k+n-1)
$
for $k\in\bN_0$, see \cite[Section 5.7.8, Eq. (7.15)]{Gurarie}. The corresponding real eigenspace is $\bV_{-\Delta_M}(\lambda_{k\alpha})=\cH_k^n$, where $\cH_k^n$ denotes the space of real spherical harmonics of degree $k$ on $S^n$. The complexification $\cH_k^n\otimes_\bR\bC$ is an irreducible $\SO(n+1)$-representation, hence $\cH_k^n$ is irreducible as a real $\SO(n+1)$-representation.
\end{Example}

In rank one symmetric spaces the eigenspaces of $-\Delta_M$ are irreducible, as illustrated in Example \ref{ex:sphere}. This follows from the fact that for $r=1$ the parameter $\alpha$ is determined by a single nonnegative integer and the map $\alpha\mapsto\lambda_\alpha$ is strictly increasing, so each eigenvalue corresponds to exactly one restricted highest weight.
For higher rank symmetric spaces this is no longer true, since distinct restricted highest weights can yield the same eigenvalue. The next example illustrates this phenomenon.

\begin{Example}\label{ex:product-spheres}
Let $G=\SO(n_1+1)\times\SO(n_2+1)$ and $H=\SO(n_1)\times\SO(n_2)$, so that $M=G/H=S^{n_1}\times S^{n_2}$ with the product of the standard metrics. By Example~\ref{ex:sphere}, each factor has rank $1$, hence $r=\rank M=2$. Thus for some simple restricted roots $\alpha_1,\alpha_2$, one has
$
\Lambda_{\frak a}^+=\{k\alpha_1+l\alpha_2:k,l\in\bN_0\}.
$
By the product structure of $M$ and Example~\ref{ex:sphere},
$
\rho=\frac{n_1-1}{2}\alpha_1+\frac{n_2-1}{2}\alpha_2.
$
Moreover, for $k,l\in\bN_0$,
\[
\lambda_{k\alpha_1+l\alpha_2}=k(k+n_1-1)+l(l+n_2-1)
\]
and
\[
\cH_{k\alpha_1+l\alpha_2}=\left(\cH^{n_1}_{k}\otimes\cH^{n_2}_{l}\right)\otimes_\bR\bC\subset L^2(M;\bC)
\]
is the irreducible complex $G$-representation from Theorem \ref{thm:HC-spectrum}. If $n_1=n_2$, then $\lambda_{k\alpha_1+l\alpha_2}=\lambda_{l\alpha_1+k\alpha_2}$ and for $k\ne l$
\[
\bV_{-\Delta_M}(\lambda_{k\alpha_1+l\alpha_2})^\bC=\cH_{k\alpha_1+l\alpha_2}\oplus\cH_{l\alpha_1+k\alpha_2},
\]
hence $\bV_{-\Delta_M}(\lambda_{k\alpha_1+l\alpha_2})^\bC$ is not an irreducible $G$-representation.
\end{Example}

\begin{Lemma}\label{lem:highest}
For every $\alpha\in\Lambda_{\frak a}^+$, the restricted weight space $(\cH_\alpha)_\alpha$ is one-dimensional.
\end{Lemma}

\begin{proof}
Consider the trivial representation of $H$ on $\bC$. Identifying its induced $G$-rep\-re\-sen\-ta\-tion with $L^2(G/H;\bC)$, Frobenius reciprocity yields, see \cite[Thm.~9.9]{KnappLie}, that the multiplicity of the irreducible $G$-representation $\cH_\alpha$ in $L^2(G/H;\bC)$ equals $\dim (\cH_\alpha)^H$.
By Theorem \ref{thm:HC-spectrum}(2), this multiplicity is equal to $1$, hence $\dim (\cH_\alpha)^H=1$. In particular, $\cH_\alpha$ has a non-zero $H$-fixed vector.

By Theorem \ref{thm:HC-spectrum}(1), the restricted highest weight of $\cH_\alpha$ is equal to $\alpha$. Therefore, applying the Helgason theorem in the form stated in \cite[Thm.~8.49]{KnappLie}, we conclude that the $\bT$-weight space corresponding to the restricted highest weight of $\cH_\alpha$ is one-dimensional. Hence the restricted highest weight $\alpha$ occurs in $\cH_\alpha$ with multiplicity $1$.
\end{proof}

We now describe the real eigenspaces as real $\bT$-representations.
Let $\mu\in\Lambda_{\frak a}^*$. For $\mu\neq0$ we define $\bR[1,\mu]$ to be the real orthogonal two-dimensional representation of $\bT$ in which $t=\exp(\phi)\in\bT$ acts on $\bR^2$ by rotation through the angle $\mu(\phi)$. Explicitly,
\[
\bR[1,\mu]:=\left(\bR^2, \exp(\phi)\mapsto
\begin{pmatrix}
\cos\mu(\phi)&-\sin\mu(\phi)\\
\sin\mu(\phi)&\cos\mu(\phi)
\end{pmatrix}\right).
\]
For $\mu=0$ we put $\bR[1,0]:=\bR$, the trivial representation. For $k\in\bN$ we define
$
\bR[k,\mu]:=\bigoplus_{j=1}^k\bR[1,\mu].
$
By \cite[Proposition II.8.5]{BrockerDieck} the irreducible real orthogonal representations of $\bT$ are all of this form, including the trivial case $\mu=0$. For $\mu\neq 0$ the representations $\bR[1,\mu]$ and $\bR[1,-\mu]$ are isomorphic.

\begin{Remark}\label{rem:weights-integers}
After fixing the basis $\{\alpha_1,\dots,\alpha_r\}$ of simple restricted roots, every element $\mu$ of their integer span can be written uniquely in the form
$
\mu=\sum_{j=1}^r m_j\alpha_j
$
with $(m_1,\dots,m_r)\in\bZ^r$. Thus one may identify $\mu\in \Lambda_{\frak a}^*$ with the tuple $m=(m_1,\dots,m_r)\in\bZ^r$ and in this notation $\bR[1,\mu]$ becomes $\bR[1,m]$. This is the coordinate form of the notation used for instance in \cite{GarRyb}.
\end{Remark}

In the next section we will need to understand the real Laplace--Beltrami eigenspaces as $\bT$-representations. Theorem \ref{thm:HC-spectrum} provides a decomposition of each complexified eigenspace into the irreducible summands $\cH_\alpha$, each occurring with multiplicity $1$. This determines where the representations $\bR[1,\alpha]$ can occur and shows that, for $\alpha\neq0$, such a representation occurs for the first time at the eigenvalue $\lambda_\alpha$.

\begin{Proposition}\label{prop:first-plane}
For every $\alpha\in\Lambda_{\frak a}^+\setminus\{0\}$ the real $\bT$-representation $\bR[1,\alpha]$ occurs with multiplicity $1$ in the eigenspace $\bV_{-\Delta_M}(\lambda_\alpha)$. Moreover, if $\beta\in\Lambda_{\frak a}^+$ and $\lambda_\beta<\lambda_\alpha$, then $\bR[1,\alpha]$ does not occur in $\bV_{-\Delta_M}(\lambda_\beta)$.
\end{Proposition}

\begin{proof}
Fix $\alpha\in\Lambda_{\frak a}^+\setminus\{0\}$ and let $\lambda_\alpha$ be the corresponding eigenvalue. Using \eqref{eq:weight-decomp} and \eqref{eq:restricted-weight-space}, the complex eigenspace $\bV_{-\Delta_M}(\lambda_\alpha)^\bC$, regarded as a complex $\bT$-representation, admits the decomposition into restricted weight spaces
\[
\bV_{-\Delta_M}(\lambda_\alpha)^\bC=\bigoplus_{\mu\in \Lambda_{\frak a}^*}\bV_\mu.
\]
Since $\bV_{-\Delta_M}(\lambda_\alpha)^\bC$ is the complexification of a real $\bT$-representation, complex conjugation satisfies $\overline{\bV_\mu}=\bV_{-\mu}$. In particular, $\bV_\alpha\oplus\bV_{-\alpha}$ is invariant under conjugation, hence it is the complexification of its real part
$
\{u\in \bV_\alpha\oplus\bV_{-\alpha}:\overline u=u\}
$
which is a $\bT$-invariant subspace of $\bV_{-\Delta_M}(\lambda_\alpha)$ and, as a real $\bT$-representation, is isomorphic to $\bR[\dim\bV_\alpha,\alpha]$. Therefore the multiplicity of $\bR[1,\alpha]$ in $\bV_{-\Delta_M}(\lambda_\alpha)$ is equal to $\dim\bV_\alpha$.

By Theorem \ref{thm:HC-spectrum}(4),
\[
\bV_{-\Delta_M}(\lambda_\alpha)^\bC=\bigoplus_{\gamma\in\Lambda_{\frak a}^+:\lambda_\gamma=\lambda_\alpha}\cH_\gamma.
\]
By Lemma \ref{lem:highest}, $\dim(\cH_\alpha)_\alpha=1$. Suppose that $\alpha$ also occurs in some $\cH_\gamma$ with $\lambda_\gamma=\lambda_\alpha$ and $\gamma\neq\alpha$. Since $\gamma$ is the restricted highest weight of $\cH_\gamma$, one has $\mu\preceq\gamma$ for every restricted weight $\mu$ of $\cH_\gamma$. In particular, as $\alpha$ is a restricted weight of $\cH_\gamma$ and $\alpha\neq\gamma$, we get $\alpha\prec\gamma$. By Theorem \ref{thm:HC-spectrum}(5), this implies $\lambda_\alpha<\lambda_\gamma$, which contradicts $\lambda_\gamma=\lambda_\alpha$. Therefore $\dim\bV_\alpha=1$ and consequently $\bR[1,\alpha]$ occurs with multiplicity $1$ in $\bV_{-\Delta_M}(\lambda_\alpha)$.

Now let $\beta\in\Lambda_{\frak a}^+$ be such that $\lambda_\beta<\lambda_\alpha$. If $\bR[1,\alpha]$ occurred in $\bV_{-\Delta_M}(\lambda_\beta)$, then the restricted weight $\alpha$ would occur in $\bV_{-\Delta_M}(\lambda_\beta)^\bC$. By Theorem \ref{thm:HC-spectrum}(4), it would then occur in some $\cH_\gamma$ with $\lambda_\gamma=\lambda_\beta$. Since $\lambda_\gamma<\lambda_\alpha$, we have $\gamma\neq\alpha$ and as above this implies $\alpha\prec\gamma$. By Theorem \ref{thm:HC-spectrum}(5), this gives $\lambda_\alpha<\lambda_\gamma=\lambda_\beta$, a contradiction. Hence $\bR[1,\alpha]$ does not occur in $\bV_{-\Delta_M}(\lambda_\beta)$.
\end{proof}

\begin{Remark}
Having the highest weight of an irreducible representation, all other weights occurring in the representation can be determined for example via Kostant's multiplicity formula, see \cite[Chapter VI, Thm. 3.2]{BrockerDieck}. Restricting these weights to $\frak a$ and using \eqref{eq:restricted-weight-space}, one obtains the corresponding restricted weights and their multiplicities, keeping in mind that distinct weights may have the same restriction to $\frak a$. Combined with the argumentation from the proof, this determines the decomposition of $\bV_{-\Delta_M}(\lambda_\alpha)$ as a $\bT$-representation.
\end{Remark}

For a real $G$-representation $\bV$ denote by $\bV^G$ its fixed point subspace.

\begin{Proposition}\label{prop:fixed-point}
If $\lambda_\alpha\neq 0$, then $\bV_{-\Delta_M}(\lambda_\alpha)^G=\{0\}$.
\end{Proposition}
\begin{proof}
The action of $G$ on $M$ is transitive, so for any $x,y\in M$ there exists $g\in G$ such that $y=gx$. Let $u\in\bV_{-\Delta_M}(\lambda_\alpha)^G$. Then $u(y)=u(gx)=u(x)$, hence $u$ is constant and $\Delta_M u=0$. Since $-\Delta_M u=\lambda_\alpha u$ and $\lambda_\alpha\neq 0$, it follows that $u=0$.
\end{proof}

\subsection{The $\nabla_\bT$-degree for strongly indefinite functionals}\label{subsec:degree}
We recall basic properties of the Euler ring of a torus, which will be used in the construction of an equivariant bifurcation index. For the general theory of the Euler ring $U(G)$ of a compact Lie group $G$ we refer to \cite{TomDieck1979,TomDieck}. Here we restrict to the case $G=\bT$ and follow the notation from~\cite{GarRyb}.

Let $\sub(\bT)$ denote the set of closed subgroups of $\bT$. For $H\in\sub(\bT)$, consider the homogeneous space $\bT/H$ with the left $\bT$-action induced by left translation. We write $\bT/H^+$ for the pointed space obtained by adjoining a disjoint base point. It is known that $\bT/H^+$ is a pointed $\bT$-CW-complex with exactly one $0$-cell. The $\bT$-equivariant Euler characteristic of $\bT/H^+$, denoted $\chi_\bT(\bT/H^+)$, is defined as in \cite[Definition 2.3]{GarRyb}.

The Euler ring $U(\bT)$ is the free $\bZ$-module generated by the elements $\chi_\bT(\bT/H^+)$ with $H\in\sub(\bT)$, equipped with the multiplication induced by the smash product of pointed $\bT$-CW-complexes.
For tori, an explicit multiplication formula is given in \cite{GarRyb}. Namely, if $H,H'\in\sub(\bT)$ and $H'':=H\cap H'$, then
\begin{equation}\label{eq:UTmultiplication}
\chi_\bT(\bT/H^+)\star\chi_\bT(\bT/H'^+)=
\begin{cases}
\chi_\bT(\bT/H''^+), & \dim H+\dim H'=\dim\bT+\dim H'',\\
\Theta, & \dim H+\dim H'<\dim\bT+\dim H'',
\end{cases}
\end{equation}
where $\Theta$ is the zero element and the unit is $\bI:=\chi_\bT(\bT/\bT^+)$.

\begin{Remark}\label{rem:bigcodim} From equation \eqref{eq:UTmultiplication} it follows that if $y\in U(\bT)$ is a $\bZ$-linear combination of terms $\chi_\bT(\bT/H^+)$ with $\dim H\le \dim\bT-2$, then for every $x\in U(\bT)$ the product $x\star y$ is again a $\bZ$-linear combination of such terms.
\end{Remark}

The Euler ring $U(\bT)$ is used to define the degree for $\bT$-invariant strongly indefinite functionals introduced in~\cite{GolRyb}, which we now briefly recall.

Let $\bH$ be a real Hilbert space equipped with an orthogonal action of $\bT$. Assume that $\bH$ admits an approximation scheme $\{\pi_n:\bH\to\bH\}_{n\in\bN_0}$ in the sense of \cite{GolRyb}, and write $\bH^n:=\pi_n(\bH)$ for each $n$. Consider a $\bT$-invariant functional $\Phi\in C^1(\bH,\bR)$ of the form $\Phi(u)=\tfrac12\langle Lu,u\rangle-\eta(u)$, where:
\begin{itemize}
\item $L:\bH\to\bH$ is a bounded, self-adjoint, $\bT$-equivariant Fredholm operator of index $0$, with $\ker L=\bH^0$ and $\pi_n\circ L=L\circ\pi_n$ for all $n$,
\item $\nabla\eta:\bH\to\bH$ is a completely continuous, $\bT$-equivariant operator.
\end{itemize}
Here $\bT$-invariance of $\Phi$ means that $\Phi(t\cdot u)=\Phi(u)$ for all $t\in\bT$ and $u\in\bH$, so that $\nabla\Phi$ is $\bT$-equivariant in the sense that $\nabla\Phi(t\cdot u)=t\cdot\nabla\Phi(u)$.

Let $\Omega\subset\bH$ be an open, bounded, $\bT$-invariant subset such that $\partial\Omega\cap(\nabla\Phi)^{-1}(0)=\emptyset$. Then the degree for $\bT$-invariant strongly indefinite functionals, $\nabla_\bT\text{-}\deg(\nabla\Phi,\Omega)\in U(\bT)$, is well-defined. It satisfies the standard properties: excision, additivity, homotopy invariance, multiplicativity and linearization at nondegenerate critical points.

The degree of the negative identity map is given by
\begin{equation}\label{eq:degree-id}
\nabla_\bT\text{-}\deg(-\mathrm{Id},B(\bV))=\chi_\bT(S^\bV),
\end{equation}
where $B(\bV)$ denotes the unit ball in a finite-dimensional orthogonal $\bT$-representation $\bV$, and $S^\bV=B(\bV)/\partial B(\bV)$ is its one-point compactification, see \cite{Geba}.

For every nonzero restricted weight $\mu\in\Lambda_{\frak a}^*$ set
$H_\mu:=\{\exp(\phi)\in\bT:\mu(\phi)\in2\pi\bZ\}.$
The group $H_\mu$ is the isotropy subgroup of the $\bT$-action on $\bR[1,\mu]$ at any nonzero point. By \cite[Corollary 4.6.7]{DuisKolk} and the classification of real $\bT$-representations, the subgroups $H_\mu$ are precisely the closed subgroups of codimension one in $\bT$ and every proper closed subgroup of $\bT$ is an intersection of finitely many such subgroups.

Since for $\mu\neq0$ the real $\bT$-representations $\bR[1,\mu]$ and $\bR[1,-\mu]$ are isomorphic and $H_\mu=H_{-\mu}$, we fix a subset $\cM$ of the set of nonzero restricted weights such that every nonzero element of $\Lambda_{\frak a}^+$ belongs to $\cM$ and for every nonzero $\mu\in\Lambda_{\frak a}^*$ exactly one of $\mu$ and $-\mu$ belongs to $\cM$.

In what follows, $\cO$ denotes an unspecified $\bZ$-linear combination of terms $\chi_\bT(\bT/H^+)$ with $\dim H\le\dim\bT-2$. By Remark~\ref{rem:bigcodim}, such terms remain of the same form under multiplication in $U(\bT)$.
The following theorem is a reformulation of \cite[Theorem 3.2]{GarRyb} in the notation introduced above. It expresses $\chi_\bT(S^\bV)$ in terms of the subgroups $H_\mu$.
\begin{Theorem}\label{thm:UTnmult}
Let
$\bV=\bR[k_0,0]\oplus\bigoplus_{\mu\in\cM}\bR[k_\mu,\mu]$
be a $\bT$-representation, where $k_0,k_\mu\in\bN_0$ and $k_\mu=0$ for all but finitely many $\mu\in\cM$. Then
\[
\chi_\bT(S^\bV)=(-1)^{k_0}\left(\bI-\sum_{\mu\in\cM} k_\mu\chi_\bT(\bT/H_\mu^+)\right)+\cO.
\]
\end{Theorem}

Using \eqref{eq:UTmultiplication} and Theorem~\ref{thm:UTnmult} one can check that
\begin{equation}\label{eq:chi-inv-codim1}
\chi_\bT(S^\bV)^{-1}=(-1)^{k_0}\left(\bI+\sum_{\mu\in\cM} k_\mu\chi_\bT(\bT/H_{\mu}^+)\right)+\cO.
\end{equation}

For $x\in U(\bT)$ and $H\in\sub(\bT)$, let $x_H$ denote the coefficient of $\chi_\bT(\bT/H^+)$ in $x$.

\begin{Lemma}\label{lem:codim1-coeff}
Consider $\bT$-representations of the form
\[
\bV=\bR[k_0,0]\oplus\bigoplus_{\mu\in\cM}\bR[k_\mu,\mu],\ \
\bW=\bR[l_0,0]\oplus\bigoplus_{\mu\in\cM}\bR[l_\mu,\mu],
\]
where $k_0,l_0,k_\mu,l_\mu\in\bN_0$ and $k_\mu=l_\mu=0$ for all but finitely many $\mu\in\cM$. Then for every $\mu\in\cM$ and $N\in\bN$ one has
\begin{equation}\label{eq:codim1-product}
\bigl(\chi_\bT(S^\bV)^N\star(\chi_\bT(S^\bW)^N-\bI)\bigr)_{H_\mu}=(-1)^{k_0N}N\Bigl((-1)^{l_0N+1}l_\mu-\bigl((-1)^{l_0N}-1\bigr)k_\mu\Bigr)
\end{equation}
and
\begin{equation}\label{eq:codim1-product-inv}
\bigl(\chi_\bT(S^\bV)^{-N}\star(\chi_\bT(S^\bW)^N-\bI)\bigr)_{H_\mu}=(-1)^{k_0N}N\Bigl((-1)^{l_0N+1}l_\mu+\bigl((-1)^{l_0N}-1\bigr)k_\mu\Bigr).
\end{equation}
\end{Lemma}

\begin{proof}
By Theorem~\ref{thm:UTnmult} we have
\[
\chi_\bT(S^\bV)=(-1)^{k_0}\left(\bI-\sum_{\mu\in\cM}k_\mu\chi_\bT(\bT/H_\mu^+)\right)+\cO.
\]
Using \eqref{eq:UTmultiplication} and Remark~\ref{rem:bigcodim}, one obtains
\[
\chi_\bT(S^\bV)^N=(-1)^{k_0N}\left(\bI-N\sum_{\mu\in\cM}k_\mu\chi_\bT(\bT/H_\mu^+)\right)+\cO.
\]
Similarly,
\[
\chi_\bT(S^\bW)^N-\bI=\bigl((-1)^{l_0N}-1\bigr)\bI-(-1)^{l_0N}N\sum_{\mu\in\cM}l_\mu\chi_\bT(\bT/H_\mu^+)+\cO.
\]

Multiplying these expansions, Remark~\ref{rem:bigcodim} shows that the terms $\cO$ do not contribute to the coefficient at $H_\mu$. Moreover, by \eqref{eq:UTmultiplication}, products of two codimension one terms do not contribute to a codimension one coefficient. Taking the coefficient at $H_\mu$ in the product yields \eqref{eq:codim1-product}.

For \eqref{eq:codim1-product-inv}, use \eqref{eq:chi-inv-codim1} and the same argument to obtain
\[
\chi_\bT(S^\bV)^{-N}=(-1)^{k_0N}\left(\bI+N\sum_{\mu\in\cM}k_\mu\chi_\bT(\bT/H_\mu^+)\right)+\cO
\]
and then take the coefficient at $H_\mu$ in
$\chi_\bT(S^\bV)^{-N}\star(\chi_\bT(S^\bW)^N-\bI)$.

\end{proof}

\begin{Remark}\label{rem:codim1-dim}
Since $\bR[k_\mu,\mu]$ is even-dimensional for $\mu\neq0$, we have $(-1)^{k_0}=(-1)^{\dim\bV}$ and $(-1)^{l_0}=(-1)^{\dim\bW}$. Thus the factors $(-1)^{Nk_0}$ and $(-1)^{Nl_0}$ in \eqref{eq:codim1-product} and \eqref{eq:codim1-product-inv} can be replaced by $(-1)^{N\dim\bV}$ and $(-1)^{N\dim\bW}$.
\end{Remark}

\section{Main Results}\label{sec:main}
In this section we formulate and prove the main unboundedness result for global bifurcation continua of the system \eqref{eq:main}. We introduce the variational setting, identify the bifurcation values, define an equivariant bifurcation index and use it to show that, with the possible exception of the zero level, all expected bifurcations from the trivial branch occur and give unbounded continua.

Consider the non-cooperative elliptic system on the compact symmetric space $M=G/H$:
\begin{equation}\label{eq:main}
a_i\Delta_M u_i(x)=\nabla_{u_i}F(u(x),\lambda),\quad i=1,\dots,p,\ x\in M,
\end{equation}
where:
\begin{itemize}
\item[(A1)] $F\in C^2(\bR^p\times\bR,\bR)$ with gradient $\nabla_uF=(\nabla_{u_1}F,\dots,\nabla_{u_p}F)$,
\item[(A2)] $\nabla_uF(u,\lambda)=\lambda u+\nabla_u h(u,\lambda)$ and $h(u,\lambda)=o(|u|^2)$ as $u\to 0$,
\item[(A3)] $a_i\in\{-1,1\}$,
\item[(A4)] there exist $q<2\dim M/(\dim M-2)$ (or any $q<\infty$ if $\dim M\le 2$) and  $C>0$ such that $|\nabla_u h(u,\lambda)|\le C(1+|u|^{q-1})$ for all $(u,\lambda)\in\bR^p\times\bR$.
\end{itemize}

Consider $\bH:=\bigoplus_{i=1}^p H^1(M)$  with the scalar product
\[
\langle u,v\rangle_{\bH}=\sum_{i=1}^p\langle u_i,v_i\rangle_{H^1}
=
\sum_{i=1}^p\int_M\bigl(\langle \nabla u_i,\nabla v_i\rangle+u_iv_i\bigr)dM
\]
and with the $\bT$-action $(t\cdot u)(x):=u(t^{-1}x)$. Then $\bH$ is an orthogonal $\bT$-representation.
 Define the $\bT$-invariant functional $\Phi:\bH\times\bR\to\bR$ by
\[
\Phi(u,\lambda):=-\tfrac12\sum_{i=1}^p a_i\int_M|\nabla u_i|^2 dM -\int_M F(u,\lambda) dM .
\]
Weak solutions of \eqref{eq:main} are exactly the critical points of $\Phi$.

Define the operator $T:H^1(M)\to H^1(M)$ by
$
\langle Tu,v\rangle_{H^1}=\int_M uv dM$  for all $u,v\in H^1(M).$
Then $T$ is bounded, self-adjoint, compact and $\bT$-equivariant. Set
$
\eta_0(u,\lambda):=\int_M h(u(x),\lambda) dM(x)
$
and define operators $L,K:\bH\to\bH$ by
$$
L(u_1,\dots,u_p):=(-a_1u_1,\dots,-a_pu_p),\
K(u_1,\dots,u_p):=(Tu_1,\dots,Tu_p).$$
Then
\[
\nabla_u\Phi(u,\lambda)=Lu-(\lambda\Id+L)Ku-\nabla_u\eta_0(u,\lambda).
\]
Moreover, $L$ is a bounded, self-adjoint and $\bT$-equivariant Fredholm operator of index $0$, while $K$ and $\nabla_u\eta_0(\cdot,\lambda)$ are $\bT$-equivariant and completely continuous. The former is immediate from the definition of $T$. The latter follows from assumption \textup{(A4)}, the compact Sobolev embedding $H^1(M)\hookrightarrow L^q(M)$, see \cite{Aubin,Hebey}, and the standard argument from \cite{Rabinowitz1986}.

We call a solution $(u,\lambda)\in\bH\times\bR$ of $\nabla_u\Phi(u,\lambda)=0$ trivial if $u=0$ and nontrivial otherwise. The set of trivial solutions is thus given by $\{0\} \times \bR$. A point $(0,\lambda_0)\in\bH\times\bR$ is called a global bifurcation point of nontrivial solutions of $\nabla_u\Phi(u,\lambda)=0$ if there exists a closed and connected set $\cC(\lambda_0)\subset\bH\times\bR$ such that
\begin{enumerate}
\item $\cC(\lambda_0)\subset closure\{(u,\lambda)\in\bH\times\bR:\nabla_u\Phi(u,\lambda)=0, u\neq 0\}$,
\item $(0,\lambda_0)\in\cC(\lambda_0)$ and $\cC(\lambda_0)\neq\{(0,\lambda_0)\}$,
\item either $\cC(\lambda_0)$ is unbounded in $\bH\times\bR$ or it intersects the trivial branch at $(0,\lambda_1)$ with $\lambda_1\ne\lambda_0$.
\end{enumerate}

From the Implicit Function Theorem it follows that if $(0,\lambda_0)$ is a global bifurcation point, then $\ker\nabla_u^2\Phi(0,\lambda_0)\neq\{0\}$. We therefore set
$$
\Lambda=\{\lambda\in\bR:\ker\nabla_u^2\Phi(0,\lambda)\neq\{0\}\},
$$
the set of parameters for which the linearization at $(0,\lambda)$ is not invertible and thus the set of possible bifurcation levels.

From \textup{(A2)} it follows that
\[
\nabla_u^2\Phi(0,\lambda)=(-a_1\Delta_M-\lambda \mathrm{Id},\ldots,-a_p\Delta_M-\lambda \mathrm{Id}).
\]
The following lemma is an immediate consequence. Denote by $\sigma(-\Delta_M)$ the spectrum of $-\Delta_M$ and set $-\sigma(-\Delta_M)=\{-\mu:\mu\in\sigma(-\Delta_M)\}$. Let $n_\pm$ be the number of indices $i\in\{1,\dots,p\}$ with $a_i=\pm1$ in \eqref{eq:main}.
\begin{Lemma}\label{lem:spec-structure}
With notation as above:
\begin{enumerate}[(i)]
\item The set $\Lambda$ is discrete and
\[
\Lambda=
\begin{cases}
\sigma(-\Delta_M), & n_->0, n_+=0,\\
-\sigma(-\Delta_M), & n_+>0, n_-=0,\\
\sigma(-\Delta_M)\cup-\sigma(-\Delta_M), & n_+n_->0.
\end{cases}
\]
\item Fix $\lambda_{\alpha}\in\sigma(-\Delta_M)\setminus\{0\}$. Then:
\begin{enumerate}[(a)]
\item $\ker\nabla_u^2\Phi(0,\lambda_{\alpha})=\bigoplus_{i=1}^{n_-}\bV_{-\Delta_M}(\lambda_{\alpha})$;
\item $\ker\nabla_u^2\Phi(0,-\lambda_{\alpha})=\bigoplus_{i=1}^{n_+}\bV_{-\Delta_M}(\lambda_{\alpha})$;
\item $\ker\nabla_u^2\Phi(0,0)=\bigoplus_{i=1}^{p}\bV_{-\Delta_M}(0)$.
\end{enumerate}
\end{enumerate}
\end{Lemma}

The next theorem is the main result of this paper. It shows that, with the possible exception of $\lambda=0$, every parameter value at which bifurcation is expected actually gives rise to bifurcation from the trivial branch. Moreover, it establishes that the corresponding continua of nontrivial solutions are unbounded in $\bH\times\bR$, thus providing not only existence but also qualitative information about the global structure of the solution set.

\begin{Theorem}\label{thm:unbound}
Consider the system \eqref{eq:main} with potential $F$ satisfying assumptions \textup{(A1)}-\textup{(A4)} and let $\lambda_\alpha\in\sigma(-\Delta_M)\setminus\{0\}$.
\begin{enumerate}
\item If $n_->0$, then $(0,\lambda_\alpha)$ is a global bifurcation point and the continuum $\cC(\lambda_\alpha)\subset\bH\times\bR$  is unbounded.
\item If $n_+>0$, then $(0,-\lambda_\alpha)$ is a global bifurcation point and the continuum $\cC(-\lambda_\alpha)\subset\bH\times\bR$  is unbounded.
\item If $p$ is odd, then $(0,0)$ is a global bifurcation point and the continuum $\cC(0)\subset\bH\times\bR$ is unbounded.
\end{enumerate}
\end{Theorem}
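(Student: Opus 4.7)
The plan is to argue by contradiction via the equivariant Rabinowitz alternative (Theorem~\ref{thm:AltRab}): if the continuum $\cC$ were bounded in $\bH \times \bR$, it would re-intersect the trivial branch at finitely many distinct parameters $\lambda_{\alpha_0}, \ldots, \lambda_{\alpha_k}$, and the corresponding torus-equivariant bifurcation indices would satisfy $\sum_{j=0}^{k} \BIF(\lambda_{\alpha_j}) = \Theta$ in $U(\bT)$. The goal is to exhibit a single coordinate of $U(\bT)$ on which this identity fails.

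For cases (1) and (2), among the restricted highest weights $\alpha_j \in \bN_0^l$ arising from eigenspaces on $\cC$ (via Lemma~\ref{lem:spec-structure}), select $\alpha^\star$ maximal in the partial order on weights. Since $\alpha \mapsto \lambda_\alpha$ is strictly increasing in this order, $\alpha^\star$ is well defined, and every other relevant $\alpha_j$ is either strictly dominated by $\alpha^\star$ or incomparable with it. Proposition~\ref{prop:first-plane} then ensures that $\bR[1, \alpha^\star]$ appears with multiplicity exactly one in $\bV_{-\Delta_M}(\lambda_{\alpha^\star})$ and is absent from every other eigenspace $\bV_{-\Delta_M}(\lambda_{\alpha_j})$ on the return path. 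By Lemma~\ref{lem:spec-structure} the Hessian kernel at $(0, \lambda_{\alpha_j})$ is $n_\pm$ copies of the corresponding eigenspace (with $n_\pm$ chosen according to the sign of $\lambda_{\alpha_j}$), and formula~\eqref{eq:degree-id} combined with the multiplicativity of $\nabla_{\bT}\text{-}\deg$ yields
\[
\BIF(\lambda_{\alpha_j}) \;=\; \cR_j \star \bigl(\chi_{\bT}(S^{\bU_j}) - \bI\bigr),
\]
where $\bU_j$ is that kernel and $\cR_j \in U(\bT)$ is a pre-crossing reference.

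Applying Theorem~\ref{thm:UTnmult}, the multiplication rule~\eqref{eq:UTmultiplication} and Corollary~\ref{cor:bigcodim}, the coefficient of the codim-$1$ character $\chi_{\bT}(\bT/H_{\alpha^\star}^+)$ in $\sum_j \BIF(\lambda_{\alpha_j})$ collects contributions only from the at most two indices $j$ with $\lambda_{\alpha_j} = \pm \lambda_{\alpha^\star}$ (other $\chi_{\bT}(S^{\bU_j}) - \bI$ supplying zero at this character by Proposition~\ref{prop:first-plane}), plus an even correction produced when the $\bI$-coefficient $(-1)^{n_\pm k_0} - 1$ of $\chi_{\bT}(S^{\bU_j}) - \bI$ is non-zero. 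A careful analysis of the crossing signs and the parity of $n_\pm$ — noting in particular that for $n_\pm$ even the $\bI$-coefficient vanishes so the even correction drops out, while for $n_\pm$ odd the dominant $\pm n_\pm$ term is itself odd — shows the resulting coefficient is non-zero in every case, contradicting the vanishing and proving cases (1)--(2).

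Case (3) is handled by reduction to the $G$-fixed subspace. Since $M$ is $G$-homogeneous, $\bH^G$ consists of the constant functions and is isomorphic to $\bR^p$; on $\bH^G$ the system~\eqref{eq:main} reduces to $\lambda u + \nabla_u h(u, \lambda) = 0$, whose linearisation at $0$ is $\lambda\,\Id$. The Brouwer degree of $\lambda\,\Id$ on a small ball changes by $(-1)^p - 1 = -2$ as $\lambda$ crosses $0$ when $p$ is odd, and $\lambda\,\Id$ is invertible for $\lambda \neq 0$, so $0$ is the unique characteristic value of the restricted problem. The classical Rabinowitz alternative on $\bR^p \times \bR$ produces an unbounded continuum $\cC^G(0)$ through $(0, 0)$; every such constant solution automatically solves the full system, so $\cC^G(0) \subset \cC(0)$, and hence $\cC(0)$ is unbounded in $\bH \times \bR$. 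The main obstacle in cases (1) and (2) is the careful bookkeeping inside $U(\bT)$ — validating the product form $\BIF = \cR \star (\chi_{\bT}(S^{\bU}) - \bI)$ via the finite-dimensional approximation of Subsection~\ref{subsec:degree}, and tracking the crossing signs when both $+\lambda_{\alpha^\star}$ and $-\lambda_{\alpha^\star}$ lie on the return path. The essential geometric input is Proposition~\ref{prop:first-plane}: the absence of $\bR[1, \alpha^\star]$ from every competing eigenspace on the return path — including those whose highest weight is merely incomparable with $\alpha^\star$ — is exactly what guarantees the distinguished coefficient is uncontested.
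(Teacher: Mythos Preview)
Your selection of $\alpha^\star$ as maximal in the \emph{partial order} on restricted highest weights, rather than as the weight with the largest eigenvalue $\lambda_{\alpha^\star}=\max_j|\lambda_j|$, leaves a genuine gap in cases (1)--(2). The factorization $\BIF_{\bT}(\lambda_{\alpha_j})=\cR_j\star(\chi_{\bT}(S^{\bU_j})-\bI)$ is correct, but the reference factor $\cR_j$ is built from \emph{all} eigenspaces $\bV_{-\Delta_M}(\lambda_\beta)$ with $\lambda_\beta<|\lambda_{\alpha_j}|$ (see Lemma~\ref{lem:indices}), indexed by eigenvalue and not by the weight order. If some $\alpha_j$ on the return path is incomparable with $\alpha^\star$ yet satisfies $\lambda_{\alpha_j}>\lambda_{\alpha^\star}$, then $\cR_j$ contains the summand $\bV_{-\Delta_M}(\lambda_{\alpha^\star})$ and carries a non-zero $H_{\alpha^\star}$-coefficient. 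Your parity dichotomy (``$n_\pm$ even $\Rightarrow$ correction vanishes; $n_\pm$ odd $\Rightarrow$ dominant term is odd'') does not cover this, because corrections from different crossings involve different exponents $N\in\{n_-,n_+\}$: for instance with $n_-$ even, $n_+$ odd, $\theta_+=1$, $\theta_-=0$, and a negative crossing $-\lambda_{\alpha_j}$ with $\lambda_{\alpha_j}>\lambda_{\alpha^\star}$, the dominant contribution $\pm n_-$ is even and can be cancelled by the even correction $\pm 2n_+\cdot a_{j,\alpha^\star}$ coming from that crossing. The paper sidesteps all of this by taking $\lambda_\alpha=\max_j|\lambda_j|$: then every eigenspace entering any other $\cR_j$ or $\bU_j$ has eigenvalue strictly below $\lambda_\alpha$, and the ``in particular'' clause of Proposition~\ref{prop:first-plane} forces the $H_\alpha$-coefficient of every $\BIF_{\bT}(\lambda_j)$ with $|\lambda_j|<\lambda_\alpha$ to vanish outright, so no correction term arises and Lemma~\ref{lem:multinUT} finishes cleanly.

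Your argument for case (3) via restriction to the fixed-point space $\bH^G\cong\bR^p$ and the classical (non-equivariant) Rabinowitz alternative is correct and genuinely different from the paper's. The paper instead invokes Lemma~\ref{lem:indices}(3) to get $\BIF_{\bT}(0)=\bigl((-1)^{n_-}-(-1)^{n_+}\bigr)\bI\neq\Theta$ for $p$ odd, applies Theorem~\ref{thm:AltRab}, and then observes that a bounded $\cC(0)$ would meet the trivial branch at some $(0,\lambda_j)$ with $\lambda_j\neq 0$, whose continuum is already unbounded by parts (1)--(2), contradicting boundedness. Your route is more elementary and self-contained, avoiding the equivariant machinery at the cost of producing only constant solutions on the exhibited unbounded branch; the paper's route ties case (3) back to the already-established cases but uses the full $U(\bT)$-valued index.
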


To prove Theorem \ref{thm:unbound} we use the $\bT$-equivariant degree for strongly indefinite functionals and the associated bifurcation index, defined as follows.
Fix $\lambda_0\in\Lambda$ and choose $\epsilon>0$ so that
$
[\lambda_0-\epsilon,\lambda_0+\epsilon]\cap\Lambda=\{\lambda_0\}.
$
Then
$
\partial B_\delta(\bH)\cap(\nabla_u\Phi(\cdot,\lambda_0\pm\epsilon))^{-1}(0)=\emptyset
$
for some $\delta>0$,
where $B_\delta(\bH)$ denotes the ball in $\bH$ of radius $\delta$ centered at $0$. In particular, the degrees
$
\nabla_\bT\text{-}\deg(\nabla_u\Phi(\cdot,\lambda_0\pm\epsilon),B_\delta(\bH))
$
are well defined, with the standard approximation scheme defined in terms of the eigenspace decomposition.
The bifurcation index at $\lambda_0$ is then defined by
\[
\BIF_\bT(\lambda_0)=\nabla_\bT\text{-}\deg(\nabla_u\Phi(\cdot,\lambda_0+\epsilon),B_\delta(\bH))-\nabla_\bT\text{-}\deg(\nabla_u\Phi(\cdot,\lambda_0-\epsilon),B_\delta(\bH))\in U(\bT).
\]
The following theorem is an equivariant version of the Rabinowitz alternative formulated in terms of this bifurcation index.

\begin{Theorem}\label{thm:AltRab}
Fix $\lambda_0\in\Lambda$. Assume that either $\lambda_0\neq 0$ or $\lambda_0=0$ and $p$ is odd. Then $\BIF_\bT(\lambda_0)\neq\Theta$ and consequently $(0,\lambda_0)$ is a global bifurcation point. Moreover, one of the following alternatives holds:
\begin{enumerate}
\item $\cC(\lambda_0)$ is unbounded in $\bH\times\bR$,
\item $\cC(\lambda_0)$ is bounded and
\[
\cC(\lambda_0)\cap(\{0\}\times\bR)=\{0\}\times\{\lambda_1,\dots,\lambda_s\}\subset\{0\}\times\Lambda,
\]
with $\BIF_\bT(\lambda_1)+\cdots+\BIF_\bT(\lambda_s)=\Theta$.
\end{enumerate}
\end{Theorem}

The proof follows the classical argument of \cite{Rabinowitz1971}, with the Leray--Schauder degree replaced by $\nabla_\bT$-degree. The nonvanishing of $\BIF_\bT(\lambda_0)$ is ensured by \cite[Theorem 4.5]{GarRyb}. If $\lambda_0\neq 0$, this applies because $\bV_{-\Delta_M}(\lambda_0)$ is a nontrivial $\bT$-representation by Proposition~\ref{prop:first-plane}. If $\lambda_0=0$ and $p$ is odd, it applies because $\dim\ker\nabla_u^2\Phi(0,0)=p$ by Lemma~\ref{lem:spec-structure}.

For $\lambda_\alpha\in\sigma(-\Delta_M)\setminus\{0\}$ let $\cW(\lambda_\alpha)=\bigoplus_{\lambda_\beta<\lambda_\alpha}\bV_{-\Delta_M}(\lambda_\beta)$. Reasoning as in \cite[Lemma 3.4]{GolSte2021} we obtain:

\begin{Lemma}\label{lem:indices}
Fix $\lambda_\alpha\in\sigma(-\Delta_M)\setminus\{0\}$.
\begin{enumerate}
\item If $n_->0$, then
\[
\BIF_\bT(\lambda_\alpha)=\nabla_\bT\text{-}\deg(-\mathrm{Id},B(\cW(\lambda_\alpha)))^{n_-}\star\bigl(\nabla_\bT\text{-}\deg(-\mathrm{Id},B(\bV_{-\Delta_M}(\lambda_\alpha)))^{n_-}-\bI\bigr).
\]
\item If $n_+>0$, then
\[
\BIF_\bT(-\lambda_\alpha)=\nabla_\bT\text{-}\deg(-\mathrm{Id},B(\cW(\lambda_\alpha)\oplus\bV_{-\Delta_M}(\lambda_\alpha)))^{-n_+}\star\bigl(\nabla_\bT\text{-}\deg(-\mathrm{Id},B(\bV_{-\Delta_M}(\lambda_\alpha)))^{n_+}-\bI\bigr).
\]
\item If $\lambda_\alpha=0$, then $\BIF_\bT(0)=((-1)^{n_-}-(-1)^{n_+})\bI$.
\end{enumerate}
\end{Lemma}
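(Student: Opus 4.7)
My plan is to reduce each of the two degrees appearing in $\BIF_{\bT}(\lambda_\alpha)$ to an explicit linear computation via the linearization of $\nabla_u\Phi$ at the origin, and then to assemble the bifurcation index using the multiplicative structure of the $\nabla_{\bT}$-degree together with identity \eqref{eq:degree-id}.

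\emph{Linearization and spectral decomposition.} First I would invoke the linearization property of $\nabla_{\bT}\text{-}\deg$ at the nondegenerate critical point $0\in\bH$. Assumption (A2) gives $\nabla_u h(u,\lambda)=o(|u|)$ as $u\to 0$, so the straight-line homotopy between $\nabla_u\Phi(\cdot,\lambda)$ and its linearization $\nabla_u^2\Phi(0,\lambda)=L-\lambda\,\mathrm{Id}$ stays free of zeros on $\partial B_\delta(\bH)$ for $\delta$ small and $\lambda\in\{\lambda_\alpha\pm\epsilon\}\subset\bR\setminus\cP(\Phi)$; homotopy invariance then yields
\[
\nabla_{\bT}\text{-}\deg(\nabla_u\Phi(\cdot,\lambda),B_\delta(\bH))=\nabla_{\bT}\text{-}\deg(L-\lambda\,\mathrm{Id},B_\delta(\bH)).
\]
I would then use Theorem~\ref{thm:HC-spectrum} to split $\bH$ block by block as $\bigoplus_{i=1}^{p}\bigoplus_\beta\bV_{-\Delta_M}(\lambda_\beta)$; on each summand $L-\lambda\,\mathrm{Id}$ acts by a scalar whose sign is read off directly from $a_i$, $\lambda_\beta$, and $\lambda$, and Lemma~\ref{lem:spec-structure} identifies exactly which eigenvalues cross $0$ at $\lambda_\alpha$.

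\emph{Cases (1) and (2).} Using the multiplicativity of $\nabla_{\bT}\text{-}\deg$ across direct sums of $\bT$-representations and identity \eqref{eq:degree-id}, I would factor the degree over the blocks. For case (1), only the $n_-$ blocks with $a_i=-1$ contain an eigenvalue crossing $0$ at $\lambda_\alpha>0$: before the crossing each such block contributes $\nabla_{\bT}\text{-}\deg(-\mathrm{Id},B(\bigoplus_{\lambda_\beta<\lambda_\alpha}\bV_{-\Delta_M}(\lambda_\beta)))$, while passing through $\lambda_\alpha$ adjoins a further factor of $\nabla_{\bT}\text{-}\deg(-\mathrm{Id},B(\bV_{-\Delta_M}(\lambda_\alpha)))$ per active block. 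The inactive $a_i=+1$ blocks give a $\lambda$-independent factor that cancels in $\deg(+\epsilon)-\deg(-\epsilon)$. Factoring out the pre-crossing contribution produces exactly the formula stated in (1); case (2) follows by the same argument after $\lambda\mapsto-\lambda$ and the swap $a_i=+1\leftrightarrow a_i=-1$.

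\emph{Case (3) and main obstacle.} For $\lambda_\alpha=0$ the kernel of $\nabla_u^2\Phi(0,0)$ is $p$ copies of the trivial one-dimensional representation $\bV_{-\Delta_M}(0)=\bR$, and $\nabla_{\bT}\text{-}\deg(-\mathrm{Id},B(\bR))=-\bI$ by \eqref{eq:degree-id} and Theorem~\ref{thm:UTnmult}. All $p$ blocks are active here, but the $a_i=+1$ and $a_i=-1$ blocks enter with opposite signs in the strongly indefinite normalization relative to the reference operator $L=\bigoplus(-a_i\Delta_M)$; careful accounting collapses the degree at $\lambda=+\epsilon$ to $(-1)^{n_-}\bI$ and the degree at $\lambda=-\epsilon$ to $(-1)^{n_+}\bI$, whose difference is the claimed $((-1)^{n_-}-(-1)^{n_+})\bI$. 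I expect the main obstacle to be exactly this sign bookkeeping in case (3), since each block type contributes a copy of the trivial summand to the kernel but does so with opposite orientation conventions in the strongly indefinite degree. Cases (1) and (2) are cleaner because only one block type is active at each nonzero critical level, so the product structure of $U(\bT)$ closes without competition between the two block types.
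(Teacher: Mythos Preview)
Your plan is correct and coincides with the paper's approach, which simply defers to \cite[Lemma~3.4]{GolSte2021} and amounts to exactly this: linearize at the origin via homotopy invariance, split $\bH$ block-by-block along the Laplace--Beltrami eigenspaces, exploit multiplicativity of the $\nabla_{\bT}$-degree, and subtract. One clarification worth making precise: in case~(1) the inactive $a_i=+1$ blocks do not ``cancel'' in the difference merely by being $\lambda$-independent---a common factor $X$ in $X\star A - X\star B$ persists as $X\star(A-B)$---but rather contribute $\bI$ outright, because the strongly indefinite degree is normalized relative to $L$ and those blocks undergo no spectral crossing on the positive half-line; this is the same normalization you correctly invoke in case~(3).
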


To apply Lemma~\ref{lem:indices}, we now focus on the coordinate $\BIF_\bT(\pm\lambda_\alpha)_{H_\alpha}$, defined as the coefficient of $\chi_\bT(\bT/H_\alpha^+)$ in $\BIF_\bT(\pm\lambda_\alpha)\in U(\bT)$. The following lemma provides the explicit values of these coordinates.
\begin{Lemma}\label{lem:multinUT}
Fix $\lambda_\alpha,\lambda_\beta\in\sigma(-\Delta_M)\setminus\{0\}$ and set $d_W=\dim\cW(\lambda_\alpha)$ and $d_V=\dim\bV_{-\Delta_M}(\lambda_\alpha)$. Then
\begin{enumerate}[(i)]
\item if $\lambda_\beta<\lambda_\alpha$, then $\BIF_\bT(\pm\lambda_\beta)_{H_\alpha}=0$,
\item $\BIF_\bT(\lambda_\alpha)_{H_\alpha}=(-1)^{(d_W+d_V)n_-+1}n_-$,
\item $\BIF_\bT(-\lambda_\alpha)_{H_\alpha}=(-1)^{(d_W+d_V)n_++1}n_+$.
\end{enumerate}
\end{Lemma}

\begin{proof}
(i) Fix $\lambda_\beta<\lambda_\alpha$. By Proposition~\ref{prop:first-plane}, the representation $\bR[1,\alpha]$ is absent from $\bV_{-\Delta_M}(\lambda_\beta)$, hence it is absent from $\cW(\lambda_\beta)$ and from $\bV_{-\Delta_M}(\lambda_\beta)$. In the notation of Lemma~\ref{lem:codim1-coeff} this means that the multiplicities $k_\alpha$ and $l_\alpha$ equal $0$ for every factor appearing in the products from Lemma~\ref{lem:indices} at the level $\lambda_\beta$. Therefore \eqref{eq:codim1-product} and \eqref{eq:codim1-product-inv} give $\BIF_\bT(\pm\lambda_\beta)_{H_\alpha}=0$.

(ii) By Lemma~\ref{lem:indices}(i) we have
$$\BIF_\bT(\lambda_\alpha)=\chi_\bT(S^{\cW(\lambda_\alpha)})^{n_-}\star(\chi_\bT(S^{\bV_{-\Delta_M}(\lambda_\alpha)})^{n_-}-\bI).$$
By Proposition~\ref{prop:first-plane}, the multiplicity of $\bR[1,\alpha]$ equals $0$ in $\cW(\lambda_\alpha)$ and equals $1$ in $\bV_{-\Delta_M}(\lambda_\alpha)$. Applying \eqref{eq:codim1-product} with $\bV=\cW(\lambda_\alpha)$, $\bW=\bV_{-\Delta_M}(\lambda_\alpha)$, $N=n_-$ and $\mu=\alpha$ and using Remark~\ref{rem:codim1-dim}, yields
\[
\BIF_\bT(\lambda_\alpha)_{H_\alpha}=(-1)^{n_-d_W}n_-(-1)^{n_-d_V+1}=(-1)^{(d_W+d_V)n_-+1}n_-.
\]

(iii) By Lemma~\ref{lem:indices}(ii),
$$\BIF_\bT(-\lambda_\alpha)=\chi_\bT(S^{\cW(\lambda_\alpha)\oplus\bV_{-\Delta_M}(\lambda_\alpha)})^{-n_+}\star(\chi_\bT(S^{\bV_{-\Delta_M}(\lambda_\alpha)})^{n_+}-\bI).$$
Here $\bR[1,\alpha]$ occurs with multiplicity $1$ in $\cW(\lambda_\alpha)\oplus\bV_{-\Delta_M}(\lambda_\alpha)$ and with multiplicity $1$ in $\bV_{-\Delta_M}(\lambda_\alpha)$. Applying \eqref{eq:codim1-product-inv} with $\bV=\cW(\lambda_\alpha)\oplus\bV_{-\Delta_M}(\lambda_\alpha)$, $\bW=\bV_{-\Delta_M}(\lambda_\alpha)$, $N=n_+$ and $\mu=\alpha$, and using Remark~\ref{rem:codim1-dim}, we obtain
\[
\BIF_\bT(-\lambda_\alpha)_{H_\alpha}=(-1)^{n_+(d_W+d_V)}n_+\Bigl((-1)^{n_+d_V+1}+\bigl((-1)^{n_+d_V}-1\bigr)\Bigr)=(-1)^{(d_W+d_V)n_++1}n_+.
\]
\end{proof}

\subsubsection*{Proof of Theorem \ref{thm:unbound}}\label{subsec:proof}
Fix $\lambda_\alpha\in\sigma(-\Delta_M)\setminus\{0\}$ and assume $n_->0$. By Theorem~\ref{thm:AltRab}, there is a continuum $\cC(\lambda_\alpha)\subset\bH\times\bR$ emanating from $(0,\lambda_\alpha)$. Suppose that $\cC(\lambda_\alpha)$ is bounded. Then Theorem~\ref{thm:AltRab} yields $\lambda_1,\dots,\lambda_s\in\Lambda$ such that
\[
\cC(\lambda_\alpha)\cap(\{0\}\times\bR)=\{0\}\times\{\lambda_1,\dots,\lambda_s\}\subset\{0\}\times\Lambda\]
and
\[\BIF_\bT(\lambda_1)+\dots+\BIF_\bT(\lambda_s)=\Theta.
\]
Without loss of generality, assume $\lambda_\alpha=\max\{|\lambda_j|:1\le j\le s\}$. Taking the $H_\alpha$-coordinate, we get
\[
\BIF_\bT(\lambda_1)_{H_\alpha}+\dots+\BIF_\bT(\lambda_s)_{H_\alpha}=0.
\]
By Lemma~\ref{lem:multinUT}(i), all terms with $|\lambda_j|<\lambda_\alpha$ vanish in the $H_\alpha$-coordinate. Hence the above identity reduces to
\[
\theta_+\BIF_\bT(\lambda_\alpha)_{H_\alpha}+\theta_-\BIF_\bT(-\lambda_\alpha)_{H_\alpha}=0
\]
for some $\theta_\pm\in\{0,1\}$.
Lemma~\ref{lem:multinUT} gives
\[
\BIF_\bT(\lambda_\alpha)_{H_\alpha}=(-1)^{(d_W+d_V)n_-+1}n_-,\
\BIF_\bT(-\lambda_\alpha)_{H_\alpha}=(-1)^{(d_W+d_V)n_++1}n_+.
\]
Since $n_->0$, the first number is nonzero, so necessarily $\theta_+=\theta_-=1$. We obtain
\[
(-1)^{(d_W+d_V)n_-+1}n_-+(-1)^{(d_W+d_V)n_++1}n_+=0.
\]
Equivalently,
$
(-1)^{(d_W+d_V)(n_--n_+)}n_-=-n_+,
$
which is impossible for nonnegative integers $n_\pm$ with $n_->0$. Thus $\cC(\lambda_\alpha)$ is unbounded. This proves (1). The proof of (2) is analogous, starting from $n_+>0$ and the continuum $\cC(-\lambda_\alpha)$.

Finally assume $p$ is odd. By Lemma~\ref{lem:indices}(iii) one has $\BIF_{\bT}(0)\neq 0$, so by Theorem~\ref{thm:AltRab} the point $(0,0)$ is a global bifurcation point and there exists a continuum $\cC(0)\subset\bH\times\bR$ bifurcating from $(0,0)$.

Suppose that $\cC(0)$ is bounded. Then, by Theorem~\ref{thm:AltRab},
\[
\cC(0)\cap(\{0\}\times\bR)=\{0\}\times\{\lambda_1,\dots,\lambda_s\}\subset\{0\}\times\Lambda.
\]
If $\lambda_j\neq 0$, then the same connected set $\cC(0)$ also gives global bifurcation from $(0,\lambda_j)$. By the first part of the proof, every such continuum must be unbounded. This contradicts the boundedness of $\cC(0)$. Therefore $\cC(0)$ is unbounded.

This completes the proof of Theorem~\ref{thm:unbound}.
\qed

\begin{Remark}\label{rem:sym-break}
It is known that if $(0,\lambda_0)$ is a bifurcation point and $\ker\nabla_u^2\Phi(0,\lambda_0)^G=\{0\}$, then a symmetry breaking occurs at $(0,\lambda_0)$, meaning that there exists a neighbourhood of $(0,\lambda_0)$ such that every nontrivial solution on the bifurcating branch has strictly smaller isotropy group than the trivial solution, whose isotropy group equals $G$. This criterion goes back to Dancer \cite{Dancer} and its extension to bifurcation from orbits is given in \cite[Lemma 3.11]{GKS2018}.

In our setting, every bifurcation point $(0,\lambda_0)$ with $\lambda_0\neq 0$ is a symmetry breaking point. Indeed, if $\lambda_0\in\Lambda\setminus\{0\}$, then $\lambda_0=\pm\lambda_\alpha$ for some $\lambda_\alpha\in\sigma(-\Delta_M)\setminus\{0\}$. By Lemma \ref{lem:spec-structure}\textup{(ii)}, the space $\ker\nabla_u^2\Phi(0,\lambda_0)$ is a direct sum of copies of $\bV_{-\Delta_M}(\lambda_\alpha)$. By Proposition \ref{prop:fixed-point} we have $\bV_{-\Delta_M}(\lambda_\alpha)^G=\{0\}$, hence also $\ker\nabla_u^2\Phi(0,\lambda_0)^G=\{0\}$. Therefore symmetry breaking occurs at every nonzero bifurcation point.
\end{Remark}

\begin{Remark}
In the special case of the sphere, see Example~\ref{ex:sphere}, one has $\lambda_{k\alpha}=k(k+n-1)$ and $\bV_{-\Delta_M}(\lambda_{k\alpha})=\cH_k^n$, where $\cH_k^n$ is an irreducible $\SO(n+1)$-representation with restricted highest weight $k\alpha$. Thus the bifurcation results of \cite{RybickiLB} for a single equation and of \cite{RybSte2015} for systems are recovered as special cases of our main theorem. The present framework extends these results from spheres to arbitrary compact symmetric spaces.
\end{Remark}


\begin{thebibliography}{99}

\bibitem{Aubin} T. Aubin, \emph{Some Nonlinear Problems in Riemannian Geometry}, Springer Monographs in Mathematics, Springer, Berlin, 1998.

\bibitem{BKS} Z. Balanov, W. Krawcewicz, H. Steinlein, \emph{Applied equivariant degree}, AIMS Series on Differential Equations and Dynamical Systems, American Institute of Mathematical Sciences, 2006.

\bibitem{BrockerDieck} T. Bröcker, T. tom Dieck, \emph{Representations of Compact Lie Groups}, Springer, 1985.

\bibitem{FultonHarris} W. Fulton, J. Harris, \emph{Representation Theory}, Springer, 1991.

\bibitem{Dancer} E. N. Dancer, \emph{On non-radially symmetric bifurcation}, J. London Math. Soc. 20(2) (1979), 287--292.

\bibitem{Dancer1982} E. N. Dancer, \emph{Symmetries, degree, homotopy indices and asymptotically homogeneous problems}, Nonlinear Anal. 6(7) (1982), 667--686.

\bibitem{DuisKolk} J. J. Duistermaat, J. A. C. Kolk, \emph{Lie Groups}, Universitext, Springer-Verlag, Berlin, 2000.

\bibitem{GarRyb} G. L. Garza, S. Rybicki, \emph{Equivariant bifurcation index}, Nonlinear Anal. 73(9) (2010), 2779--2791.

\bibitem{Geba} K. Gęba, \emph{Degree for gradient equivariant maps and equivariant Conley index}, in: Topological Nonlinear Analysis II, Birkhäuser (1997), 247--272.

\bibitem{GKS2018} A. Gołębiewska, J. Kluczenko, P. Stefaniak, \emph{Bifurcations from the orbit of solutions of the Neumann problem}, Calc. Var. Partial Differential Equations 57 (2018), Art. 21.

\bibitem{GolRyb} A. Gołębiewska, S. Rybicki, \emph{Global bifurcations of critical orbits of $G$-invariant strongly indefinite functionals}, Nonlinear Anal. 74(5) (2011), 1823--1834.

\bibitem{GS2020} A. Gołębiewska, P. Stefaniak, \emph{Global bifurcation from an orbit of solutions to non-cooperative semi-linear Neumann problem}, J. Differential Equations 268 (2020), 6702--6736.

\bibitem{GolSte2021} A. Gołębiewska, P. Stefaniak, \emph{Structure of sets of solutions of parametrised semi-linear elliptic systems on spheres}, Nonlinear Anal. 212 (2021), 112451.

\bibitem{Gurarie} D. Gurarie, \emph{Symmetries and Laplacians: Introduction to Harmonic Analysis, Group Representations and Applications}, North-Holland Mathematics Studies 174, North-Holland, Amsterdam, 1992.

\bibitem{Hebey} E. Hebey, \emph{Sobolev Spaces on Riemannian Manifolds}, Lecture Notes in Mathematics, vol. 1635, Springer, Berlin, 1996.

\bibitem{HelgasonDiffGeom} S. Helgason, \emph{Differential Geometry, Lie Groups, and Symmetric Spaces}, Amer. Math. Soc., 2001.

\bibitem{Izydorek} M. Izydorek, \emph{Equivariant Conley index in Hilbert spaces and applications to strongly indefinite problems}, Nonlinear Anal. 51(1) (2002), 33--66.

\bibitem{IzeVig} J. Ize, A. Vignoli, \emph{Equivariant Degree Theory}, de Gruyter Series in Nonlinear Analysis and Applications 8, de Gruyter, Berlin, 2003.

\bibitem{KnappLie} A. W. Knapp, \emph{Lie Groups Beyond an Introduction}, Digital 2nd ed., Progress in Mathematics 140, Published by the Author, East Setauket, NY, 2023.

\bibitem{KusBal} A. Kushkuley, Z. Balanov, \emph{Geometric Methods in Degree Theory for Equivariant Maps}, Lecture Notes in Mathematics 1632, Springer, Berlin, 1996.

\bibitem{Nirenberg} L. Nirenberg, \emph{Topics in Nonlinear Functional Analysis}, Courant Institute of Mathematical Sciences, New York, 1974.

\bibitem{Parusinski} A. Parusiński, \emph{Gradient homotopies of gradient vector fields}, Studia Math. 96(1) (1990), 73--80.

\bibitem{Rabinowitz1971} P. H. Rabinowitz, \emph{Some global results for nonlinear eigenvalue problems}, J. Funct. Anal. 7 (1971), 487--513.

\bibitem{Rabinowitz1986} P. H. Rabinowitz, \emph{Minimax Methods in Critical Point Theory with Applications to Differential Equations}, CBMS Regional Conference Series in Mathematics 65, Amer. Math. Soc., Providence, RI, 1986.

\bibitem{RybickiLB} S. Rybicki, \emph{On Rabinowitz alternative for the Laplace--Beltrami operator on $S^{n-1}$: continua that meet infinity}, Differ. Integral Equ. 9(6) (1996), 1267--1277.

\bibitem{Ryb2005milano} S. Rybicki, \emph{Degree for equivariant gradient maps}, Milan J. Math. 73 (2005), 103--144.

\bibitem{RybSte2015} S. Rybicki, P. Stefaniak, \emph{Unbounded sets of solutions of non-cooperative elliptic systems on spheres}, J. Differential Equations 259 (2015), 2833--2849.

\bibitem{RybShiSte} S. Rybicki, N. Shioji, P. Stefaniak, \emph{Rabinowitz alternative for non-cooperative elliptic systems on geodesic balls}, Adv. Nonlinear Stud. 18(3) (2018), 581--597.

\bibitem{TomDieck1979} T. tom Dieck, \emph{Transformation Groups and Representation Theory}, Lecture Notes in Mathematics 766, Springer, Berlin, 1979.

\bibitem{TomDieck} T. tom Dieck, \emph{Transformation Groups}, de Gruyter, Berlin, 1987.

\end{thebibliography}
\end{document}